\documentclass[12pt]{article}

\usepackage{graphicx}
\usepackage{comment}
\usepackage{mathrsfs}

\usepackage{nameref}
\usepackage{blkarray}
\usepackage[shortlabels,inline]{enumitem}

\usepackage{amsmath}
\usepackage{amssymb}
\usepackage{empheq}
\usepackage{tikz}

\usepackage[shortlabels]{enumitem}
\setlist[enumerate]{nosep}

\usepackage[doc]{optional}
\usepackage{xcolor}
\usepackage[colorlinks=true,
            linkcolor=refkey,
            urlcolor=lblue,
            citecolor=lblue]{hyperref}
\usepackage{float}
\usepackage{soul}
\usepackage{graphicx}
\definecolor{labelkey}{rgb}{0,0.08,0.45}
\definecolor{refkey}{rgb}{0,0.6,0.0}
\definecolor{Brown}{rgb}{0.45,0.0,0.05}
\definecolor{lime}{rgb}{0.00,0.8,0.0}
\definecolor{lblue}{rgb}{0.5,0.5,0.99}

\usepackage{mathpazo}
\usepackage{blkarray}
\usepackage{subcaption}
\colorlet{hlcyan}{cyan!30}

\usepackage{stmaryrd}
\usepackage{amssymb}
\oddsidemargin -0.1cm
\textwidth  16.5cm
\topmargin  -0.1cm
\headheight 0.0cm
\textheight 21.2cm
\parindent  4mm
\parskip    10pt 
\tolerance  3000

\hyphenation{non-empty}

\makeatletter
\def\namedlabel#1#2{\begingroup
   \def\@currentlabel{#2}%
   \label{#1}\endgroup
}
\makeatother


  \renewcommand{\epsilon}{\varepsilon}
  \renewcommand{\phi}{\varphi}


  \def\llangleT{\langle\kern-3.5pt\langle}
  \def\rrangleT{\rangle\kern-3.5pt\rangle}

  \newcommand{\defeq}{\coloneq}

  \makeatletter
  \def \weaktostar@sym{\setbox0=\hbox{$\rightharpoonup$}\rlap{\hbox 
  to\wd0{\hss\raise1ex\hbox{$\scriptscriptstyle{*\,}$}\hss}}\box0}
  \def \weaktostar    {\mathrel{\weaktostar@sym}}
  \makeatother

  \DeclareMathOperator{\ri}{ri}

  \def\DerivConeSym{V}
  \def\DerivConeDSym{W}
\newcommand{\DerivCone}[1][\relax]{\DerivConeSym\ifx\relax#1\relax\else(#1)\fi}
\newcommand{\DerivConePrime}[1][\relax]{\tilde\DerivConeSym\ifx\relax#1\relax\else(#1)\fi}
\newcommand{\DerivConeX}[2][]{\DerivConeSym_{#2}\ifx\relax#1\relax\else(#1)\fi}

\newcommand{\DerivConeDXt}[3][]{\DerivConeDSym_{#3}^{#2}\ifx\relax#1\relax\else(#1)\fi}
\newcommand{\DerivConeDXtp}[3][]{\DerivConeDSym_{#3}^{\circ,#2}\ifx\relax#1\relax\else(#1)\fi}

%

%
 

%

\newcommand{\localRhoX}[1][]{\if\relax\detokenize{#1}\relax r_x\else r_{x,#1}\fi}
\newcommand{\localRhoY}[1][]{\if\relax\detokenize{#1}\relax r_y\else r_{y,#1}\fi}

\def\weaklim{\mathop{\operatorname{w-\kern.07em lim}\,}}
\def\weaklimsup{\mathop{\operatorname{w-\kern.07em lim\,sup}\,}}
\def\weakliminf{\mathop{\operatorname{w-\kern.07em lim\,inf}\,}}
\def\weakstarlim{\mathop{\operatorname{w-\!\ast\!-\kern.07em lim}\,}}
\def\weakstarlimsup{\mathop{\operatorname{w-\!\ast\!-\kern.07em lim\,sup}\,}}
\def\weakstarliminf{\mathop{\operatorname{w-\!\ast\!-\kern.07em lim\,inf}\,}}

\oddsidemargin -0.1cm
\textwidth  16.5cm
\topmargin  -0.1cm
\headheight 0.0cm
\textheight 21.2cm
\parindent  4mm
\parskip    10pt 
\tolerance  3000







\newcommand{\menge}[2]{\big\{{#1}~\big |~{#2}\big\}}

\newcommand{\Menge}[2]{\left\{{#1}~\Big|~{#2}\right\}}

\newcommand{\fenv}[1]%
{\ensuremath{\,\overrightarrow{\operatorname{env}}_{#1}}}
\newcommand{\benv}[1]%
{\ensuremath{\,\overleftarrow{\operatorname{env}}_{#1}}}



\DeclarePairedDelimiterX\set[2]{ \{ }{ \}_{#2} }{#1}

\DeclarePairedDelimiterX\rb[1]{ ( }{ ) }{#1}

{\begin{list}{}{%
\settowidth{\labelwidth}{\textrm{#1~}}%
\setlength{\leftmargin}{\labelwidth+\labelsep}}}
{\end{list}}
\usepackage{amsthm}
\usepackage[capitalize,nameinlink]{cleveref}
\crefname{equation}{}{equations}
\crefname{chapter}{Appendix}{chapters}
\crefname{item}{}{items}
\crefname{enumi}{}{}

\theoremstyle{definition}
\newtheorem{theorem}{Theorem}[section]
\newtheorem{lemma}[theorem]{Lemma}

\newtheorem{corollary}[theorem]{Corollary}

\newtheorem{proposition}[theorem]{Proposition}


\newtheorem{remark}[theorem]{Remark}


\usepackage{multirow}

\usepackage[customcolors]{hf-tikz}
\usetikzlibrary{calc, intersections}
\usepackage{sidecap}
 \usepgflibrary{shapes.multipart}
 \usetikzlibrary{decorations.pathreplacing}
\usetikzlibrary{fadings}
\usepgflibrary{decorations.text}

\tikzset{style green/.style={
    set fill color=green!50!lime!60,
    set border color=white,
  },
  style cyan/.style={
    set fill color=cyan!90!blue!60,
    set border color=white,
  },
  style gray/.style={
    set fill
    color=black!10!,
    set border color=white,
  },
  style orange/.style={
    set fill color=orange!80!red!60,
    set border color=white,
  },
  hor/.style={
    above left offset={-0.15,0.31},
    below right offset={0.15,-0.125},
    #1
  },
  ver/.style={
    above left offset={-0.1,0.3},
    below right offset={0.15,-0.15},
    #1
  }
}

\newcommand{\boxedeqn}[1]{%
    \begin{align}\fbox{%
        \addtolength{\linewidth}{-2\fboxsep}%
        \addtolength{\linewidth}{-2\fboxrule}%
        \begin{minipage}{\linewidth}%
        \begin{equation}#1\\begin{align}+5mm]\end{equation}%
        \end{minipage}%
      }\end{align}%
  }







\providecommand{\ri}{\operatorname{ri}}

\usepackage{algorithm,algorithmic} 
\usepackage{hyperref}

\definecolor{myblue}{rgb}{.8, .8, 1}
  \newcommand*\mybluebox[1]{%
    \colorbox{myblue}{\hspace{1em}#1\hspace{1em}}}

\newcommand*\circled[2][1.6]{\tikz[baseline=(char.base)]{
    \node[shape=circle, draw, inner sep=1pt, 
        minimum height={\f@size*#1},] (char) {\vphantom{WAH1g}#2};}}
\makeatother

\allowdisplaybreaks 

\newcommand{\Iproj}{\overleftarrow{\pi}}     
\newcommand{\Rproj}{\overrightarrow{\pi}}    

\usepackage[customcolors]{hf-tikz}
\tikzset{style green/.style={
    set fill color=green!50!lime!60,
    set border color=white,
  },
  style cyan/.style={
    set fill color=cyan!90!blue!60,
    set border color=white,
  },
  style gray/.style={
    set fill
    color=black!10!,
    set border color=white,
  },
  style orange/.style={
    set fill color=orange!80!red!60,
    set border color=white,
  },
  hor/.style={
    above left offset={-0.15,0.31},
    below right offset={0.15,-0.125},
    #1
  },
  ver/.style={
    above left offset={-0.1,0.3},
    below right offset={0.15,-0.15},
    #1
  }
}

\definecolor{mygreen}{rgb}{0.5, 1.0, 0.83}
  \newcommand*\mygreenbox[1]{%
    \colorbox{mygreen}{\hspace{1em}#1\hspace{1em}}}

\definecolor{mygrey}{rgb}{0.7,0.75,0.71}

\begin{document}

\title{\textsc{
Backpropagation from KL Projections: Differential and Exact I-Projection Correspondences
}}

\author{
Manish\ Krishan Lal\thanks{
                 Munich Center for Machine Learning and Department of Mathematics, CIT,	
Technical University of Munich,\ Garching, 85748, Germany.
                 E-mail: \texttt{manish.krishanlal@tum.de}.}
                 }

\date{\today} 
\maketitle

\begin{abstract} \noindent
We establish two correspondences between reverse-mode automatic
differentiation (backpropagation at a given forward-pass point) and
compositions of projection maps in Kullback--Leibler (KL) geometry.
In both settings, message passing enforces agreement and factorization
constraints through KL projections.

In the first setting, backpropagation arises as the differential of a KL
projection map on a lifted deterministic computation graph.
In the second setting, on complete and decomposable sum--product networks,
the same reverse-mode quantities coincide with exact probabilistic marginals
and are realized by a KL I-projection.

The distinction is that, in the first setting, projection induces structure,
whereas, in the second, structure makes the projection exact. This pedagogical note highlights the relation among backpropagation,
belief propagation, and KL projection algorithms and provides a perspective that unifies learning, sampling, and inference under a common
geometric operator.
\end{abstract}

{\bfseries 2020 Mathematics Subject Classification:}
Primary 47H09;
Secondary 90C25, 68T37.

\noindent {\bfseries Keywords:}
belief propagation; backpropagation; KL divergence; Bregman projections;
Dykstra’s algorithm; information projection; sum--product networks; message passing.

\section{Introduction}
\label{sec:introduction}

Modern deep learning is often analyzed through optimization and
dynamical-systems perspectives that study the evolution of network
parameters during training. These perspectives draw on tools from
stochastic approximation, curvature and Hessian analysis, and theories
of implicit regularization and overparameterization.

The \emph{backpropagation} algorithm is most naturally
understood as reverse-mode automatic differentiation applied to an
\emph{arithmetic (or algorithmic) circuit}. In this viewpoint, a neural
network is a directed acyclic graph whose nodes represent elementary
operations, and derivatives are propagated through this circuit by the
chain rule. This perspective underlies both computational-graph
implementations of automatic differentiation~\cite{baydin2018automatic}
and classical complexity results on circuit differentiation~\cite{parberry1994circuit}.

The question we consider here is different: can the
backpropagation computation itself be understood geometrically, as a
form of message passing on the computation graph, and can learning,
sampling, and inference also be studied under the same geometric
operator?

Our starting point is the observation that normalized
\emph{belief propagation} (BP) admits a projection interpretation in
Kullback--Leibler geometry. Walsh and Regalia~\cite{walsh2010bp} showed
that belief propagation can be viewed as a hybrid Bregman--Dykstra
projection method in which agreement and factorization constraints are
enforced through alternating KL projections.

A related observation was made in~\cite{eaton2022backprop}, where a
computation graph is embedded into a factor graph whose factors encode
the deterministic operations of the computation. Running loopy belief
propagation on this graph produces messages that encode the same
reverse-mode derivatives computed by backpropagation. In this sense,
backpropagation can be recovered as a special case of belief
propagation on an appropriately constructed graphical model. 

The contribution of the present paper is to show that
backpropagation admits two projection-theoretic interpretations within
KL geometry. One is differential: reverse-mode derivatives arise as the
differential of a KL projection map. The other is exact: in structured
probabilistic models, the same reverse-mode quantities coincide with
exact marginal probabilities and are realized by a KL
I-projection~\cite{csiszar1975idivergence}.

On the algorithmic side, our formulation also clarifies how to compare
gradient-based and constraint-based training schemes. In both
full-batch and mini-batch settings, a “training step” corresponds to a
single parameter update based on a given batch of data. In
gradient-based methods such as ADAM, this update is obtained by a
forward--backward pass on the batch. In constraint-based methods, the
update is obtained by enforcing local constraints and architectural
consensus across the same batch, possibly through multiple internal
projection iterations, as in divide-and-concur and related
projection-based learning frameworks.

Another main contribution is to make this comparison precise at the level
of information propagation: we show that a gradient-based training step
corresponds to evaluating the differential of a KL projection
operator, whereas projection-based methods iterate the operator
itself. Thus the two paradigms act on the same underlying geometric
object, but at different levels---evaluation versus linearization.
\subsection{Geometric setting}
\label{subsec:geometric-setting}

Throughout the paper, we work on finite probability simplices equipped
with KL Bregman geometry.
Messages, beliefs, and region tables are taken to be strictly positive
probability vectors, hence they lie in the relative interiors of the
relevant simplices.
We pass to log-coordinates whenever convenient.
The projections that appear are the Bregman I-projections and the
reverse-KL projections induced by the negative-entropy generator.
These objects are defined precisely in \cref{sec:prelim}.

There are two underlying scalar-valued computations in the paper.
In the first, one has a $C^1$ map
\[
F:\mathbb{R}^d\to\mathbb{R},
\qquad
z \coloneqq F(x),
\]
evaluated at an input $x$.
In the second, one has a network polynomial
\[
S:(0,\infty)^N\to(0,\infty),
\]
evaluated at evidence $e$.

To best describe both settings in a common language, we use the \emph{standard
region formalism} from variational message passing
\cite{yedidia2005constructing}.
Each node $n$ carries a scope $\mathrm{sc}(n)$, namely the set of
variables below that node, and the family of all such scopes forms a
multiset $\mathcal R$, which we call the region family.
Agreement between overlapping regions is expressed by consensus
constraints, meaning that equal-scope marginals must coincide.
Factorization across decomposed regions is expressed by product
constraints.
Depending on the setting, the state space is either a single lifted
joint object subject to these constraints, or a family of region-wise
objects coupled by them.

\subsection{Two regimes}
\label{subsec:regimes}

We study two regimes.

\begin{enumerate}[leftmargin=2em,itemsep=0.35em]

\item[\textup{(A)}]
\emph{Delta-lifted deterministic computation.}
Let $z \coloneqq F(x)$ be the scalar output of a smooth computation DAG. Each assignment $y=\psi(\cdot)$ in the DAG is replaced by a Dirac
constraint $\delta(y-\psi(\cdot))$, and one attaches a positive $C^1$ output factor $\phi:\mathbb{R}\to(0,\infty)$. This produces a lifted factor graph of the usual type used in
continuous message passing \cite{yedidia2005constructing}. On this lifted graph, normalized belief propagation is represented by
the KL projection operator
\begin{empheq}[box=\mybluebox]{equation*}
\mathcal T_{\mathrm{Proj}}
\;\defeq\;
\Rproj^{f}_{\widehat{\mathcal P}}
\circ
\Iproj^{f}_{\mathcal Q},
\end{empheq}
where $\mathcal Q$ is the consensus set for replicated variables and
$\widehat{\mathcal P}$ is the dual image of the relevant product
family; see \cref{sec:prelim,prop:WR}.
Thus $\mathcal T_{\mathrm{Proj}}$ acts on a strictly positive lifted
joint table and enforces first agreement, then factorization.

If $b_\downarrow(x)$ denotes the product of the downward BP messages
entering a variable $x$, then at a fixed forward evaluation one has
\begin{empheq}[box=\mygreenbox]{equation*}
\nabla_x \log b_\downarrow(x)
=
\frac{\phi'(z)}{\phi(z)}\,\nabla_x z.
\end{empheq}
Accordingly, the reverse-mode derivative is obtained by differentiating
the projection computation, more precisely by differentiating the
message readout associated with $\mathcal T_{\mathrm{Proj}}$.
This is the differential correspondence we prove in
\cref{thm:caseA}.

\item[\textup{(B)}]
\emph{Complete and decomposable sum--product networks.}
Here the model is a complete and decomposable SPN with strictly
positive leaf indicators
\cite{darwiche2003differential,poon2011spn}. Write $\lambda=(\lambda_{i,t})_{i,t}\in(0,\infty)^N$ for the indicator values associated with variables $X_i$ taking state
$t$, and identify the evidence $e$ with $\lambda$.
Thus $e\coloneqq\lambda$, and $N=\sum_{i=1}^M |\mathcal X_i|$. For fixed positive circuit weights, the SPN defines the multilinear map $S:(0,\infty)^N\to(0,\infty)$,
called the network polynomial. If $r$ is the root node, then $S(e)=S(r)$. Each node $n$ also carries an upward value $S(n)$ and a reverse-mode
quantity
\[
D(n)\coloneqq \frac{\partial S(r)}{\partial S(n)}.
\]

In this regime, the reverse-mode derivatives with respect to the leaf
indicators recover exact marginals.
In particular,
\begin{empheq}[box=\mygreenbox]{equation*}
\frac{\partial S(e)}{\partial \lambda_{i,t}}
=
\Pr(X_i=t\mid e)\,S(e).
\end{empheq}
Thus the backward sweep is not merely analogous to inference; it
computes exact probabilistic marginals after normalization by $S(e)$. The same computation is realized by the two-step KL projection operator
\begin{empheq}[box=\mybluebox]{equation*}
\mathcal T_{\mathrm{SPN}}
\;\defeq\;
\Rproj^{f_{\mathcal R}}_{\widehat{\mathcal P}_{\mathcal R}}
\circ
\Iproj^{f_{\mathcal R}}_{\mathcal Q_{\mathcal R}},
\end{empheq}
where $\mathcal Q_{\mathcal R}$ is the consensus set on region tables
and $\widehat{\mathcal P}_{\mathcal R}$ is the dual image of the
region-wise product family.
This is the exact I-projection correspondence of the paper; see
\cref{thm:spn-derivatives-are-marginals,prop:spn-two-step}.

\end{enumerate}

In Regime~(A), the region family $\mathcal R$ records the replicated
copies introduced by the delta lift.
In Regime~(B), it is simply the family of SPN scopes.

\subsection{Relation to prior work}
\label{subsec:prior-work}

The KL projection view of belief propagation was presented in Walsh and
Regalia \cite{walsh2010bp}.
For arithmetic circuits and SPNs, the relevant differential semantics
go back to Darwiche \cite{darwiche2003differential}.
The convex-analytic background comes from the classical theory of
Bregman projections
\cite{csiszar1975idivergence,bauschke2002forward}.
The delta-lift identity relating belief propagation and backpropagation
already appears in \cite{eaton2022backprop}.
These algorithms can be placed in a
single projection-theoretic framework, with a differential
correspondence in Regime~(A) and an exact I-projection correspondence
in Regime~(B). Moreover, loopy
belief propagation admits variational interpretations: its fixed points
correspond to stationary points of the Bethe free energy and can be
studied through CCCP-type formulations, where messages act as Lagrange
multipliers updated from current belief estimates
\cite{yuille2002cccp,yurtsever2022cccp}. 

The remainder of the paper is organized as follows.
\cref{sec:prelim} develops the notation and KL geometry.
\cref{sec:caseA} treats Regime~(A).
\cref{sec:caseF} treats Regime~(B). \cref{sec:discussion} concludes.

\section{Preliminaries and notation}
\label{sec:prelim}

For a scalar function $h$, we write $\nabla h$ for its gradient.
For a vector-valued map $F$, we write $JF$ for its Jacobian.
The vector $\mathbf 1$ denotes the all-ones vector of the relevant dimension.
We write $a\propto b$ when $a$ and $b$ agree up to a positive,
state-independent constant, followed by normalization to return to a simplex.

\paragraph{Probability simplices and relative interiors.}
Let $\mathcal{X}$ be a finite set with $|\mathcal{X}|=n$.
We identify probability distributions on $\mathcal{X}$ with points of the simplex
\[
\Delta(\mathcal{X})
\;\defeq\;
\menge{p\in\mathbb{R}^n_{\ge 0}}{\mathbf{1}^\top p = 1}.
\]
Its relative interior is $\Delta^\circ(\mathcal{X})
\defeq
\menge{p\in\Delta(\mathcal{X})}{p_i>0\ \forall i}$. Since $\Delta(\mathcal{X})$ lies in the affine hyperplane
$\{p\in\mathbb{R}^n \mid \mathbf{1}^\top p = 1\}$, it has empty interior
as a subset of $\mathbb{R}^n$. Accordingly, all smooth operations used in the paper,
including logarithms, gradients, and dual coordinates, are understood on the
relative interior $\ri(\Delta(\mathcal{X}))=\Delta^\circ(\mathcal{X})$.

\paragraph{Bregman divergences and KL geometry.}
Let $f:\mathcal D\to\mathbb R$ be strictly convex and differentiable
on a convex domain $\mathcal D\subseteq\mathbb R^n$.
For $r\in\mathcal D$ and $q\in\ri(\mathcal D)$, the associated Bregman divergence is
\[
D_f(r,q)
\;\defeq\;
f(r)-f(q)-\langle \nabla f(q),\,r-q\rangle.
\]
In general $D_f(r,q)\in[0,+\infty]$, and the value $+\infty$
may occur outside the effective domain; see
\cite{bauschke1997legendre,bauschke2017convex}.
When $f$ is of Legendre type, the Fenchel conjugate $f^*$ is
well defined, $\nabla f$ identifies primal and dual coordinates, and
\[
D_f(r,q)=D_{f^*}(\nabla f(q),\nabla f(r)).
\]
In the present paper, the relevant generator is the negative entropy
\[
f(p)\;\defeq\;\sum_i p_i\log p_i,
\qquad
p\in\Delta^\circ(\mathcal{X}),
\]
with conjugate $f^*(\theta)=\log\sum_i e^{\theta_i}$. Thus $\nabla f(p)=\bigl(1+\log p_i\bigr)_i$, and $\nabla f^*(\theta)=\mathrm{softmax}(\theta)\in\Delta^\circ(\mathcal{X})$, and $D_f(r,q)=\mathrm{KL}(r\|q)$.

\paragraph{Left (I-) and right (reverse-KL) projections.}
Let $C\subseteq\Delta(\mathcal{X})$ be nonempty, convex, and
closed in the relative topology of the simplex.
We write $\widehat C \;\defeq\; \nabla f(C)\subset\mathbb R^n$ for the image of $C$ in dual coordinates.

The left, or I-projection, is
\[
\Iproj^f_C(q)
\;\in\;
\arg\min_{r\in C} D_f(r,q)
\;=\;
\arg\min_{r\in C}\mathrm{KL}(r\|q).
\]

The right projection is
\[
\Rproj^f_{\widehat C}(q)
\;\defeq\;
\nabla f^*\!\Bigl(
  \arg\min_{u\in\widehat C} D_{f^*}\bigl(u,\nabla f(q)\bigr)
\Bigr).
\]
When $\widehat C=\nabla f(C)$, this is the usual reverse-KL
$M$-projection of $q$ onto $C$, written in dual coordinates
\cite{csiszar1975idivergence}. For KL geometry, strict convexity of $f$
ensures uniqueness on $\Delta^\circ(\mathcal{X})$.

\paragraph{Product families.}
Let $X_1,\dots,X_M$ be variables with finite alphabets
$\mathcal{X}_1,\dots,\mathcal{X}_M$, and set
\[
\Omega\;\defeq\;\prod_{i=1}^M \mathcal{X}_i.
\]
The corresponding joint simplex is $\Delta(\Omega)$.
The product family is
\[
\mathcal C
\;\defeq\;
\Menge{
  \prod_{i=1}^M q_i(x_i)
}{
  q_i\in\Delta^\circ(\mathcal{X}_i)
}
\;\subset\;
\Delta^\circ(\Omega),
\]
and its dual image is $\widehat{\mathcal C}\defeq \nabla f(\mathcal C)\subset\mathbb R^{|\Omega|}$. Right projections onto such sets enforce factorization:
they replace a joint object by the closest product object in reverse-KL
geometry.
In Regime~(A) and Regime~(B), the corresponding factorization sets are
denoted by $\mathcal P$ and $\mathcal P_{\mathcal R}$, respectively.

\paragraph{Factor graphs.}
A factor graph is a bipartite graph
$G=(\mathcal V,\mathcal F,\mathcal E)$.
Its variable nodes are the $X_i\in\mathcal V$, its factor nodes are the
$g\in\mathcal F$, and $(g,X_i)\in\mathcal E$ when the factor $g$
depends on the variable $X_i$.
Each factor is a strictly positive function
\[
g:\prod_{i\in\mathsf{nb}(g)} \mathcal{X}_i \to (0,\infty),
\]
where $\mathsf{nb}(u)$ denotes the set of neighbors of a node $u$.

\paragraph{Messages and normalized BP.}
For each edge $(g,X_i)\in\mathcal E$, belief propagation carries two messages
\[
m_{g\to i}\in\Delta^\circ(\mathcal{X}_i),
\qquad
m_{i\to g}\in\Delta^\circ(\mathcal{X}_i).
\]
The normalized sum--product updates are
\begin{align*}
m_{g\to i}(x_i)
&\propto
\sum_{x_{\mathsf{nb}(g)\setminus i}}
  g(x_{\mathsf{nb}(g)})\prod_{j\in\mathsf{nb}(g)\setminus i} m_{j\to g}(x_j),\\
m_{i\to g}(x_i)
&\propto
\prod_{h\in\mathsf{nb}(i)\setminus g} m_{h\to i}(x_i),
\end{align*}
followed by normalization so that the resulting messages again lie in
$\Delta^\circ(\mathcal{X}_i)$.

\paragraph{Beliefs and gauge invariance.}
The belief at a variable node $X_i$ is
\[
b_i(x_i)
\;\propto\;
\prod_{g\in\mathsf{nb}(i)} m_{g\to i}(x_i)
\;\in\;
\Delta^\circ(\mathcal{X}_i).
\]
Messages are only defined up to positive, state-independent scalings on each
edge.
All identities used later are written in terms of log-derivatives, and hence
are invariant under this gauge freedom.

\paragraph{Consensus sets.}
Suppose a variable $X$ is replicated into $K$ copies
$X^{(1)},\dots,X^{(K)}$, each taking values in the same alphabet
$\mathcal{X}$.
The associated consensus set is the diagonal face
\cite[Chapter~26]{bauschke2017convex}
\[
\mathcal D
\;\defeq\;
\Menge{
  q\in\Delta(\mathcal{X}^K)
}{
  q(x^{(1)},\dots,x^{(K)})=0
  \ \text{unless}\ 
  x^{(1)}=\cdots=x^{(K)}
}.
\]
Thus $\mathcal D$ consists exactly of those distributions supported on the
diagonal.
It is a closed convex face of $\Delta(\mathcal{X}^K)$.
The I-projection $\Iproj^f_{\mathcal D}$ enforces consensus by removing
off-diagonal mass and renormalizing on the diagonal.
In Regime~(A) and Regime~(B), the corresponding consensus sets are denoted by
$\mathcal Q$ and $\mathcal Q_{\mathcal R}$, respectively.

\section{Regime~(A): Delta-lifted backpropagation as the differential of a KL projection algorithm}
\label{sec:caseA}

We begin with the projection-theoretic description of normalized belief propagation due to Walsh and Regalia~\cite{walsh2010bp}. We then pass to the smooth delta-lift and show that the reverse-mode derivatives are read off from the downward messages.

\subsection{Projection algorithm: BP as a hybrid of KL projections}
\label{subsec:hybrid-proj}

Let $G=(\mathcal V,\mathcal F,\mathcal E)$ be a discrete factor graph with strictly
positive factors. Following the Walsh--Regalia replication construction,
let $\Omega$ be the finite alphabet of one replica of the variables, so that lifted joint tables
live on $\Omega^K$ for some replication index $K$.

Two constraint sets enter the construction. Let $\mathcal Q\subset \Delta(\Omega^K)$ be the diagonal face, and let
$\Delta^\circ(\mathcal Q)$ denote its relative interior, that is, the set of
probability tables supported on the diagonal and strictly positive there.
Let $\mathcal P$ be the family of product distributions on the replicated variables
in the sense of~\cite{walsh2010bp}, and write
\[
\widehat{\mathcal P}\defeq \nabla f(\mathcal P)\subset\mathbb R^{|\Omega|^K}
\]
for its image in dual coordinates.

\paragraph{Projections with faces.}
The I-projection $\Iproj^f_{\mathcal Q}$ enforces agreement among the replicas: it removes all mass off the diagonal and renormalizes on the diagonal. Since points of $\Delta^\circ(\mathcal Q)$ vanish away from the diagonal, dual coordinates on $\mathcal Q$ are understood relative to the affine hull of $\mathcal Q$. Equivalently, one may identify $\Delta(\mathcal Q)$ with the simplex on the diagonal outcomes and use the same entropy generator on that smaller simplex. With this convention, the right projection $\Rproj^f_{\widehat{\mathcal P}}$ is well defined on $\Delta^\circ(\mathcal Q)$.

For $q\in\Delta^\circ(\Omega^K)$ define the two-step operator
\begin{equation*}
\label{eq:TProj-def}
\mathcal{T}_{\mathrm{Proj}}(q)
\;\defeq\;
\Rproj^f_{\widehat{\mathcal P}}\!\big(\,\Iproj^f_{\mathcal Q}(q)\,\big).
\end{equation*}
Thus $\mathcal T_{\mathrm{Proj}}$ first imposes consensus and then imposes product structure in dual coordinates. The key point of this section is that backpropagation does not merely resemble message passing: it is obtained by differentiating the projection operator $\mathcal T_{\mathrm{Proj}}$. Thus the backward pass is the linearization of a KL projection computation.

\paragraph{Dykstra’s hybrid algorithm with Bregman Projections.}
Let $q_{-1}\in\Delta^\circ(\Omega^K)$ be the normalized product of the replicated
factor tables. Walsh and Regalia~\cite[Thm.~1]{walsh2010bp} show that normalized
BP is represented by the following Dykstra-type hybrid iteration:
\begin{align*}
k_n
&= \Rproj^f_{\widehat{\mathcal P}}\!\Big(\nabla f^{*}\big(\nabla f(q_{n-1})+\sigma_{n-1}\big)\Big),\\
\sigma_n
&= \nabla f(q_{n-1})+\sigma_{n-1}-\nabla f(k_n),\\
r_n
&= \Iproj^f_{\mathcal Q}\!\Big(\nabla f^{*}\big(\nabla f(k_n)+\tau_{n-1}\big)\Big),\\
q_n
&= \Rproj^f_{\widehat{\mathcal P}}(r_n),\\
\tau_n
&= \nabla f(k_n)+\tau_{n-1}-\nabla f(q_n),
\end{align*}
initialized with $\sigma_{-1}=\mathbf{0}$ and $\tau_{-1}=\mathbf{0}$.

\begin{proposition}[Normalized BP as a hybrid of KL projections {\cite[Thm.~1]{walsh2010bp}}]
\label{prop:WR}
For a finite factor graph $G$ with strictly positive factors and initialization $q_{-1}$ as above, the iterates of the above algorithm coincide with normalized sum--product (BP) on $G$.
In particular:
\begin{enumerate}[label=(\roman*),leftmargin=2em]
\item if $G$ is a tree, one upward--downward BP pass computes $\mathcal{T}_{\mathrm{Proj}}(q_{-1})$, and the resulting beliefs are exact marginals;
\item any fixed point of normalized BP is a fixed point of $\mathcal{T}_{\mathrm{Proj}}$.
\end{enumerate}
\end{proposition}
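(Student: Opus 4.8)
The plan is to prove the proposition by building an explicit dictionary between the multiplicative sum--product updates and the two Bregman projections, and then reading off both claims from that dictionary. The organizing observation is that the negative-entropy generator $f$ turns the dual coordinate $\nabla f$ into (an affine shift of) the logarithm, so that products of messages become sums in dual coordinates. Under this correspondence the variable-node update $m_{i\to g}\propto\prod_{h\in\mathsf{nb}(i)\setminus g}m_{h\to i}$ is the combination enforced by the consensus I-projection $\Iproj^f_{\mathcal Q}$, while the factor-node update $m_{g\to i}\propto\sum g\prod_{j\in\mathsf{nb}(g)\setminus i}m_{j\to g}$ is the marginalize-then-factorize step realized by the right projection $\Rproj^f_{\widehat{\mathcal P}}$ onto the product family. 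Establishing these two identifications, together with the fact that the Dykstra residuals $\sigma_n,\tau_n$ accumulate exactly the running log-messages, reduces the proposition to standard facts about BP.

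First I would make the replication precise: each variable $X_i$ is split into $|\mathsf{nb}(i)|$ copies, one per incident factor, the joint table lives on $\Omega^K$, and the diagonal face $\mathcal Q$ glues the copies of each variable. I would then verify two lemmas. (a) On $\Delta^\circ(\Omega^K)$, the I-projection $\Iproj^f_{\mathcal Q}$ zeros the off-diagonal mass and renormalizes on the diagonal; interpreted per variable, this is precisely the product of incoming messages at a variable node. (b) The right projection $\Rproj^f_{\widehat{\mathcal P}}$, being the reverse-KL $M$-projection onto the product family expressed in dual coordinates, replaces a joint table by the product of its single-variable marginals, i.e.\ the factor-node marginalization. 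I would then match the Dykstra bookkeeping in \eqref{eq:hyb-2-rep} and \eqref{eq:hyb-5-rep} to the message updates: the dual increments $\sigma_n,\tau_n$ store the log-corrections so that after each half-sweep the retained potentials equal the BP messages up to a state-independent additive constant (the gauge), which the normalization in each projection fixes.

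For claim (i), fix a leaves-to-root then root-to-leaves schedule on the tree. Because the graph is acyclic, every message depends only on messages from strictly earlier in the schedule, so a Dykstra correction, once computed, is never revised: a single upward sweep followed by a single downward sweep suffices, and this is exactly one application of the two-step operator $q_{-1}\mapsto\mathcal T_{\mathrm{Proj}}(q_{-1})$ (one consensus step, then one product step). Exactness of the resulting beliefs is the classical tree-BP statement: by induction up the tree the root belief equals the true marginal, which follows from the factorization of the joint distribution together with the distributive law over the tree's cut structure; the downward sweep then propagates the complementary factors so that every single-variable belief is the exact marginal.

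For claim (ii), suppose $m^\star$ is a fixed point of normalized BP, and let $q^\star$ be the associated replicated table. Running $m^\star$ through the dictionary of the second paragraph, the consensus and product updates reproduce $m^\star$, so $\Iproj^f_{\mathcal Q}$ and $\Rproj^f_{\widehat{\mathcal P}}$ each leave $q^\star$ invariant up to gauge and normalization; composing them gives $\mathcal T_{\mathrm{Proj}}(q^\star)=q^\star$. The main obstacle, and the step deserving the most care, is the degenerate geometry of $\mathcal Q$: it is a lower-dimensional face of the simplex, so the I-projection pushes mass onto the relative boundary and $\nabla f$ is only defined in the relative sense on $\mathrm{aff}(\mathcal Q)$. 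I would justify the relative-gradient convention introduced in \cref{subsec:hybrid-proj}, check that $\Rproj^f_{\widehat{\mathcal P}}$ remains well-defined when restricted to $\Delta^\circ(\mathcal Q)$, and confirm that the Dykstra residuals stay in the correct affine hull so that the alternating composition is consistent. A secondary subtlety is the gauge freedom: since messages are defined only up to positive state-independent scaling, every identification above must be read modulo such rescaling, and I would verify that the per-step normalization removes the ambiguity without disturbing the log-derivative identities used later in Regime~(A).
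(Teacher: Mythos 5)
The first thing to note is that the paper does not prove this proposition at all: it is imported, by citation, as Theorem~1 of Walsh--Regalia \cite{walsh2010bp}, and the surrounding text merely sets up the notation (the sets $\mathcal Q$ and $\widehat{\mathcal P}$, the relative-gradient convention on the diagonal face, and the hybrid iteration \eqref{eq:hyb-1-rep}--\eqref{eq:hyb-5-rep}). So your attempt is a reconstruction of the cited external proof, which is more than the paper itself undertakes. Your plan does identify the correct dictionary: in dual coordinates the generator turns products into sums, the right (reverse-KL) projection onto the product family returns the product of single-variable marginals (the factor-side marginalization), the I-projection onto the diagonal face is restrict-and-renormalize (the variable-side product of messages), and the residuals $\sigma_n,\tau_n$ must be matched to accumulated log-messages. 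You also correctly flag the two real technical hazards, namely the face geometry of $\mathcal Q$ and the gauge freedom of messages.

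As a proof, however, the proposal has a genuine gap: the entire content of the cited theorem is exactly the step you defer (``I would then match the Dykstra bookkeeping\ldots''). Asserting that $\sigma_n,\tau_n$ store log-corrections so that each half-sweep reproduces normalized BP messages \emph{is} the theorem; the proposal never carries out this verification, e.g.\ by induction on $n$ showing that $\nabla f(k_n)$ and $\nabla f(q_n)$ decompose edge-by-edge into the sweep-$n$ BP messages. Moreover, your argument for claim (ii) is incorrect as stated: you assert that $\Iproj^f_{\mathcal Q}$ and $\Rproj^f_{\widehat{\mathcal P}}$ ``each leave $q^\star$ invariant up to gauge and normalization.'' They cannot both do so: the output of $\Rproj^f_{\widehat{\mathcal P}}$ is a strictly positive product distribution, which is never supported on the diagonal, so $\Iproj^f_{\mathcal Q}(q^\star)\neq q^\star$ for any $q^\star$ in the relevant class; only the \emph{composition} can return to $q^\star$, and that is what must be computed. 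Finally, claim (ii) requires an argument that a fixed point of the memory-carrying hybrid scheme (with residuals $\sigma,\tau$) yields a fixed point of the memoryless two-step operator $\mathcal{T}_{\mathrm{Proj}}$; this is not automatic, since the two dynamical systems differ precisely by the Dykstra corrections, and the proposal elides it.
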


For the purposes of Regime~(A), \cref{prop:WR} is used only as a representation theorem. On a tree it identifies one BP sweep with the projection map $\mathcal T_{\mathrm{Proj}}$. The backpropagation identity proved below is local and does not depend on any convergence statement for the WR iterates.

\subsection{Delta factors and a message-level chain rule}
\label{subsec:delta-chain}

We now pass from the discrete projection picture to the smooth delta-lift. Let $y=\psi(x_1,\ldots,x_r)$ with
$\psi\in C^1$ on an open set
$\mathcal U\subset\mathbb{R}^{d_1}\times\cdots\times\mathbb{R}^{d_r}$,
and consider the deterministic factor
\[
g(y,x_1,\ldots,x_r) \;=\; \delta\big(y-\psi(x_1,\ldots,x_r)\big).
\]

Assume the incoming message $m_{y\to g}(y)$ is strictly positive and
$C^1$ near the forward value $y^*$, and that all other arguments
$x_{-j}$ are clamped to $x_{-j}^*$.

\begin{lemma}[Delta-factor chain rule]
\label{lem:delta-chain}
Under these assumptions,
\[
\left.\frac{\partial}{\partial x_j}\log m_{g\to x_j}(x_j)\right|_{x_j^*}
\;=\;
\left.\frac{\partial}{\partial y}\log m_{y\to g}(y)\right|_{y=y^*}\cdot
\left.\frac{\partial y}{\partial x_j}\right|_{x^*}.
\]
\end{lemma}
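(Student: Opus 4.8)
The plan is to evaluate the sum--product factor-to-variable message $m_{g\to x_j}$ directly at the delta factor and then differentiate its logarithm. First I would instantiate the BP update recalled in \cref{sec:prelim} at the factor $g$, writing the outgoing message as the marginalization of $g$ against all incoming messages except the one on edge $(g,x_j)$. In the smooth setting the finite sum over neighbor states becomes an integral over the neighbor variables $y$ and $x_k$ $(k\neq j)$, so
\[
m_{g\to x_j}(x_j)
\;\propto\;
\int \delta\bigl(y-\psi(x_j,x_{-j})\bigr)\,m_{y\to g}(y)
\prod_{k\neq j} m_{x_k\to g}(x_k)\,dy\,dx_{-j}.
\]
Since the remaining inputs are clamped at $x_{-j}^*$, the incoming messages on those edges concentrate there and the $x_{-j}$-integral collapses to evaluation at $x_{-j}^*$, leaving a single integral over $y$ against the Dirac factor.

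The second step is to apply the sifting property of the Dirac delta. Because the argument $y-\psi(x_j,x_{-j}^*)$ is affine in the integration variable $y$ with unit coefficient, no Jacobian factor is produced, and
\[
m_{g\to x_j}(x_j)\;\propto\; m_{y\to g}\bigl(\psi(x_j,x_{-j}^*)\bigr).
\]
Taking logarithms gives $\log m_{g\to x_j}(x_j)=\log m_{y\to g}\bigl(\psi(x_j,x_{-j}^*)\bigr)+c$, where $c$ is the state-independent normalization constant implicit in the $\propto$; by the gauge-invariance noted in \cref{sec:prelim}, this additive constant is annihilated by differentiation and may be dropped.

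Finally I would differentiate and evaluate at the forward point. The hypotheses that $m_{y\to g}$ is strictly positive and $C^1$ near $y^*$ and that $\psi\in C^1$ ensure that $\log m_{y\to g}\circ\psi$ is continuously differentiable in a neighborhood of $x_j^*$, so the chain rule applies:
\[
\frac{\partial}{\partial x_j}\log m_{g\to x_j}(x_j)
=\left.\frac{\partial}{\partial y}\log m_{y\to g}(y)\right|_{y=\psi(x_j,x_{-j}^*)}\cdot\frac{\partial \psi}{\partial x_j}(x_j,x_{-j}^*).
\]
Evaluating at $x_j=x_j^*$, where $\psi(x^*)=y^*$ and $\partial\psi/\partial x_j=\partial y/\partial x_j$, yields the stated identity.

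The one point requiring care is the treatment of the Dirac factor inside the continuous message-passing integral: one must justify the sifting collapse of the $y$-integral and, in particular, the \emph{absence} of a spurious $|\partial\psi/\partial y|^{-1}$-type Jacobian, which is avoided precisely because the marginalization integrates over $y$ (where the delta has unit coefficient) rather than over $x_j$. One must also confirm that it is only the normalization constant, and not the functional shape of the message in $x_j$, that is affected by the $\propto$ in the update, so that the log-derivative is well defined and gauge-invariant. I expect this distributional bookkeeping, rather than the chain rule itself, to be the main obstacle.
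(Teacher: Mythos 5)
Your proposal is correct and follows essentially the same route as the paper's proof: instantiate the continuous factor-to-variable update, sift the Dirac factor to obtain $m_{g\to x_j}(x_j)\propto m_{y\to g}\bigl(\psi(x_j,x_{-j}^*)\bigr)$, and differentiate the logarithm via the chain rule at the forward point. The additional bookkeeping you supply (collapse of the clamped $x_{-j}$-integral, absence of a Jacobian factor since the delta is affine in $y$ with unit coefficient, and gauge-invariance of the log-derivative under the normalization constant) is exactly what the paper's terser proof leaves implicit.
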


\begin{proof}
The delta constraint collapses the factor-to-variable update to a substitution:
\[
m_{g\to x_j}(x_j)
\;\propto\;
\int \delta\big(y-\psi(x_j,x_{-j}^*)\big)\, m_{y\to g}(y)\,\mathrm dy
=
m_{y\to g}\!\big(\psi(x_j,x_{-j}^*)\big).
\]
Taking logarithms and differentiating with respect to $x_j$ gives the claim.
\end{proof}

\subsection{Backpropagation as the differential of a KL projection map}
\label{subsec:main-identity}

Let $z$ denote the scalar output node of the computation DAG, and let
$\phi:\mathbb{R}\to(0,\infty)$ be a $C^1$ output factor.
Run one upward pass, so that the messages through delta factors collapse to Dirac
masses at the forward values, and then run one downward pass seeded by
\[
m_{\phi\to z}(z) \;=\; \phi(z).
\]
For each variable $v$ define the downward belief
\[
b_\downarrow(v)
\;\defeq\;
\prod_{g\;\in\;\mathrm{nb}(v)\;\cap \;\text{downstream}} m_{g\to v}(v),
\qquad
s(v)
\;\defeq\;
\left.\partial_v \log b_\downarrow(v)\right|_{v^*}.
\]

\begin{theorem}[Backpropagation as log-derivatives of downward BP messages]
\label{thm:caseA}
With the setup above, $s(z)=\phi'(z^*)/\phi(z^*)$, and for any variable $x$,
\[
\left.\nabla_x \log b_\downarrow(x)\right|_{x^*}
\;=\;
\frac{\phi'(z^*)}{\phi(z^*)}\ \left.\nabla_x z\right|_{x^*}.
\]
Equivalently, for $\phi(z)=e^{\alpha z}$ one has
$\nabla_x \log b_\downarrow(x)=\alpha\,\nabla_x z$ at the forward point $x^*$.
\end{theorem}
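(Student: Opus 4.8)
The plan is to prove the identity by induction over the nodes of the computation DAG taken in reverse topological order (from the scalar output $z$ back toward the inputs), with \cref{lem:delta-chain} supplying the inductive step. The quantity to track at each variable $v$ is the downward log-sensitivity $s(v)=\partial_v\log b_\downarrow(v)|_{v^*}$, and the statement to propagate is $s(v)=\frac{\phi'(z^*)}{\phi(z^*)}\,\nabla_v z|_{v^*}$, i.e., that $s(v)$ equals the chain-rule derivative $\partial z/\partial v$ scaled by the fixed output factor $\phi'(z^*)/\phi(z^*)$. The base case is immediate: the output $z$ has exactly one downstream factor, namely $\phi$, so $b_\downarrow(z)=m_{\phi\to z}(z)=\phi(z)$ and $s(z)=\partial_z\log\phi(z)|_{z^*}=\phi'(z^*)/\phi(z^*)$; since $\nabla_z z=1$, the claim holds at $z$, and this also establishes the first assertion of the theorem.

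For the inductive step, fix a variable $x$ and let $g_1,\dots,g_m$ be the downstream delta factors into which $x$ feeds, each encoding an assignment $y_i=\psi_i(\dots,x,\dots)$ for a variable $y_i$ strictly closer to the output. Because messages multiply, $s(x)=\sum_{i}\partial_x\log m_{g_i\to x}(x)|_{x^*}$. The message $m_{y_i\to g_i}$ feeding $g_i$ from above is the product of the downward messages arriving at $y_i$, so $\partial_{y_i}\log m_{y_i\to g_i}(y_i)|_{y_i^*}=s(y_i)$; \cref{lem:delta-chain} then gives $\partial_x\log m_{g_i\to x}(x)|_{x^*}=s(y_i)\,\partial_x y_i|_{x^*}$. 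Applying the induction hypothesis $s(y_i)=\frac{\phi'(z^*)}{\phi(z^*)}\,\partial_{y_i}z$ and summing over $i$ yields $s(x)=\frac{\phi'(z^*)}{\phi(z^*)}\sum_i\partial_{y_i}z\,\partial_x y_i$, whose inner sum is exactly the multivariate chain rule for $\partial_x z$ over the children $y_i$ of $x$. Upgrading scalar partials to gradients (and products to Jacobian--vector contractions) gives the stated vector identity, and the equivalence for $\phi(z)=e^{\alpha z}$ follows immediately since then $\phi'/\phi\equiv\alpha$.

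The main obstacle I anticipate is the bookkeeping around fan-out: a variable $x$ may feed several downstream factors, and correctness hinges on the additivity of $\partial_x\log b_\downarrow(x)$ over these factors matching, term by term, the sum-over-children form of the total derivative $\partial_x z=\sum_i\partial_{y_i}z\,\partial_x y_i$. Making this precise requires verifying that the variable-to-factor message $m_{y_i\to g_i}$ invoked in \cref{lem:delta-chain} is exactly the product of all downward messages into $y_i$, so that its log-derivative is $s(y_i)$ and the induction hypothesis applies, and that the reverse topological order is well defined so each $y_i$ is processed before $x$. Once these structural facts are in place, the remaining computation is routine, and the identity is precisely the statement that reverse-mode accumulation of $\phi'(z^*)/\phi(z^*)\cdot\nabla z$ coincides with the log-derivatives of the downward beliefs.
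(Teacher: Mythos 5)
Your proof is correct and follows essentially the same route as the paper's: reverse induction along a topological order of the DAG, with \cref{lem:delta-chain} converting each delta-factor message into the local chain-rule factor, the base case $s(z)=\phi'(z^*)/\phi(z^*)$ from $m_{\phi\to z}=\phi$, and the recursion $s(x)=\sum_i s(y_i)\,\partial_x y_i$ identified with reverse-mode accumulation. Your treatment is in fact slightly more careful than the paper's, since you explicitly verify the fan-out bookkeeping (that $m_{y_i\to g_i}$ equals the product of downward messages into $y_i$, so its log-derivative is $s(y_i)$), a point the paper's proof leaves implicit.
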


\begin{proof}
Let
\[
c\;\defeq\;\frac{\phi'(z^*)}{\phi(z^*)}.
\]
Then $s(z)=c$ by the choice of the seed message $m_{\phi\to z}=\phi$. At each deterministic factor $g$ corresponding to $y=\psi(\cdot)$,
\cref{lem:delta-chain} yields
\[
\partial_x\log m_{g\to x}
\;=\;
\partial_y\log m_{y\to g}\cdot \partial_x y
\quad\text{at the forward point}.
\]
Since $b_\downarrow(x)$ is the product of all factor-to-variable messages into $x$,
\[
s(x)
\;=\;
\partial_x\log b_\downarrow(x)
\;=\;
\sum_{y\in \mathrm{ch}(x)} s(y)\,\partial_x y,
\]
where $\mathrm{ch}(x)$ denotes the children of $x$ in the computation DAG. Thus the family $x\mapsto s(x)$ satisfies the same backward recursion as the family $x\mapsto c\,\nabla_x z$. Indeed, by the ordinary chain rule,
\[
c\,\nabla_x z
\;=\;
\sum_{y\in\mathrm{ch}(x)} c\,\nabla_y z\,\partial_x y.
\]
To conclude, choose a topological ordering $v_1,\dots,v_N$ of the DAG such that every edge points from a lower index to a higher index, and let $v_N=z$. We prove by backward induction on this order that
\[
s(v_k)=c\,\nabla_{v_k}z\qquad\text{for }k=N,N-1,\dots,1.
\]
For $k=N$ this is exactly the identity $s(z)=c=c\,\nabla_z z$. Suppose now that the identity holds for every child $y\in\mathrm{ch}(x)$ of a node $x=v_k$. Then
\[
s(x)
=
\sum_{y\in\mathrm{ch}(x)} s(y)\,\partial_x y
=
\sum_{y\in\mathrm{ch}(x)} c\,\nabla_y z\,\partial_x y
=
c\,\nabla_x z,
\]
where the last step is the chain rule above. This closes the induction and proves the theorem.
\end{proof}

\begin{corollary}[Layered backpropagation as the differential of local KL projection steps]
\label{cor:layered-geometric}
Consider a feedforward network
\[
a^{[0]}=x,\qquad
u^{[l]}=W^{[l]}a^{[l-1]}+b^{[l]},\qquad
a^{[l]}=\sigma_l(u^{[l]}),
\qquad l=1,\dots,L,
\]
with scalar output $z \;\defeq\; C(a^{[L]},y)$, and a positive output factor $\phi:\mathbb{R}\to(0,\infty)$. In the corresponding delta-lifted factor graph, each variable
$a^{[l]}$ and $u^{[l]}$ is replicated across the primitive factors in which
it participates. The consensus constraints identify these copies, while the
factorization constraints decouple the surrounding local message tables.
Define the message sensitivities
\[
s_a^{[l]}
\;\defeq\;
\left.\nabla_{a^{[l]}} \log b_\downarrow(a^{[l]})\right|_{\text{forward point}},
\qquad
s_u^{[l]}
\;\defeq\;
\left.\nabla_{u^{[l]}} \log b_\downarrow(u^{[l]})\right|_{\text{forward point}}.
\]
Then these quantities are well defined independently of the chosen copy and
propagate layerwise according to
\begin{align*}
\text{(output)}\qquad
s_a^{[L]}
&=
\left.\nabla_{a^{[L]}} \log \phi\big(C(a^{[L]},y)\big)\right|_{\text{forward point}},\\[0.5em]
\text{(activation layers)}\qquad
s_u^{[l]}
&=
\sigma_l'(u^{[l]}) \odot s_a^{[l]},\\[0.5em]
\text{(linear layers)}\qquad
s_a^{[l-1]}
&=
(W^{[l]})^\top s_u^{[l]},
\end{align*}
for $l=1,\dots,L$. Equivalently, if
\[
\mathcal G_{\mathrm{lin}}^{[l]}
\;\defeq\;
\bigl\{(a^{[l-1]},u^{[l]}) : u^{[l]}=W^{[l]}a^{[l-1]}+b^{[l]}\bigr\},
\]
and
\[
\mathcal G_{\sigma}^{[l]}
\;\defeq\;
\bigl\{(u^{[l]},a^{[l]}) : a^{[l]}=\sigma_l(u^{[l]})\bigr\},
\]
then $s_u^{[l]}=\sigma_l'(u^{[l]})\odot s_a^{[l]}$ is the differential of the local KL projection step associated with
$\mathcal G_{\sigma}^{[l]}$, while $s_a^{[l-1]}=(W^{[l]})^\top s_u^{[l]}$ is the differential of the local KL projection step associated with
$\mathcal G_{\mathrm{lin}}^{[l]}$. In particular, for $\phi(z)=e^{-z}$ and $\delta^{[l]}\defeq -\,s_u^{[l]}$,
one recovers the standard backpropagation recursion
\[
\delta^{[L]}
=
\sigma_L'(u^{[L]}) \odot \nabla_{a^{[L]}} C(a^{[L]},y),
\qquad
\delta^{[l]}
=
\sigma_l'(u^{[l]}) \odot (W^{[l+1]})^\top \delta^{[l+1]},
\]
together with the parameter gradients
\[
\nabla_{W^{[l]}} C
=
\delta^{[l]}(a^{[l-1]})^\top,
\qquad
\nabla_{b^{[l]}} C
=
\delta^{[l]}.
\]

Thus, in standard deep learning, the usual backward pass through layers is
precisely the propagation of consensus-synchronized log-derivatives of
downward BP messages. The familiar affine and nonlinear layer rules are the
factorization-decoupled differentials of local KL projection steps, and the
ordinary chain-rule recursion is the layered form of \cref{thm:caseA}.
\end{corollary}

\section{Regime~(B): Sum--product circuits --- backpropagation equals exact marginals and realizes a KL I-projection}
\label{sec:caseF}

In this regime the circuit has a genuine probabilistic interpretation: the forward pass evaluates the network polynomial, while the backward pass recovers exact marginal quantities. Thus, unlike Regime~(A), where the KL picture appears after differentiation, here the projection operator itself already produces the relevant probabilities. This shows that marginalization and differentiation arise from the same underlying KL mechanism, depending on whether one evaluates the operator or its differential.

Let $X_1,\ldots,X_M$ be discrete variables with finite alphabets $\mathcal{X}_i$, and set $\Omega \defeq \prod_{i=1}^M \mathcal{X}_i$. As in \cref{sec:prelim}, distributions on $\Omega$ are points of the simplex $\Delta(\Omega)$, and we work on its relative interior $\Delta^\circ(\Omega)$. Throughout this section all indicators and all weights are assumed strictly positive. In particular, every normalization and every logarithm that appears below is well defined.

\subsection{Sum--product networks and upward/downward values}

An SPN is a directed acyclic graph $G=(\mathcal N,\mathcal E)$ whose internal nodes are partitioned into \emph{sum} nodes $\mathsf{Sum}$ and \emph{product} nodes $\mathsf{Prod}$, and whose leaves are indicator functions of the form $\mathbf{1}[X_i=t]$ for $t\in\mathcal X_i$. We write $\mathrm{ch}(n)$ for the set of children of a node $n$.
Each node $n\in\mathcal N$ carries a \emph{scope} $\mathrm{sc}(n)\subseteq\{1,\ldots,M\}$, namely the set of variables appearing below $n$. We impose the usual structural assumptions:
\begin{itemize}[leftmargin=1.5em]
\item \textbf{Completeness:} if $s\in\mathsf{Sum}$ and $c\in\mathrm{ch}(s)$, then
\[
\mathrm{sc}(c)=\mathrm{sc}(s).
\]
\item \textbf{Decomposability:} if $p\in\mathsf{Prod}$ and $c\neq c'$ are children of $p$, then
\[
\mathrm{sc}(c)\cap \mathrm{sc}(c')=\varnothing.
\]
\item \textbf{Positive weights:} each edge $(s\to c)$ out of a sum node carries a weight
\[
w_{sc}\in(0,\infty).
\]
When $\sum_{c\in\mathrm{ch}(s)} w_{sc}=1$, we say that the sum node is \emph{normalized}.
\end{itemize}
Soft evidence is specified by strictly positive leaf values
$\lambda_{i,t}\in(0,\infty)$, for $t\in\mathcal X_i$. Let $\lambda = (\lambda_{i,t})_{i,t}\in(0,\infty)^N$, and $N\defeq\sum_{i=1}^M |\mathcal X_i|$, and identify the evidence vector $e$ with $\lambda$, writing
$e\coloneqq \lambda$. These numbers play the role of unary evidence potentials. In this soft-evidence setting, the posterior marginals are recovered from logarithmic derivatives. The \emph{network polynomial} is the map $S:(0,\infty)^N\to(0,\infty)$ obtained by the bottom-up rules
\begin{align*}
\text{Leaf } \ell=\mathbf{1}[X_i=t]:&\quad S(\ell)=\lambda_{i,t},\\[0.25em]
\text{Product } p:&\quad S(p)=\prod_{c\in\mathrm{ch}(p)} S(c),\\[0.25em]
\text{Sum } s:&\quad S(s)=\sum_{c\in\mathrm{ch}(s)} w_{sc}\,S(c).
\end{align*}
If $r$ denotes the root, we write $S(e)\;\defeq\;S(r)\;>\;0$.

\paragraph{Downward values.}
Define the \emph{downward value} at each node $n$ by
\[
D(n)\;\defeq\;\frac{\partial S(r)}{\partial S(n)},
\qquad
D(r)\equiv 1.
\]
Thus $S(n)$ is the forward value at node $n$, while $D(n)$ is the corresponding reverse-mode quantity. Since all weights and indicators are positive, every $S(n)$ is positive, and hence every derivative above is well defined.
At each sum node $s$ we shall define a local gating distribution $b_s\in\Delta^\circ(\mathrm{ch}(s))$, and at each variable $X_i$ a variable belief $b_i\in\Delta^\circ(\mathcal X_i)$. The explicit formulas appear in \cref{thm:spn-derivatives-are-marginals}.

\subsection{Derivatives equal marginals}
\label{subsec:spn-deriv-marg}

We now recall the differential semantics of arithmetic circuits in the present SPN setting. A \emph{parse} is obtained by choosing exactly one child at each sum node and all children at each product node. The positive weights and the evidence induce a probability distribution on parses.

\begin{theorem}[Log-derivatives equal marginals]
\label{thm:spn-derivatives-are-marginals}
Assume the SPN is complete and decomposable with positive weights and positive indicators.

\begin{enumerate}[label=(\alph*),leftmargin=2em,itemsep=0.25em]
\item \emph{Variable marginals.}
For each $i$ and each state $t\in\mathcal X_i$,
\[
\Pr(X_i=t\mid e)
\;=\;
\frac{\lambda_{i,t}}{S(e)}\,\frac{\partial S(e)}{\partial \lambda_{i,t}}
\;=\;
\frac{\partial \log S(e)}{\partial \log \lambda_{i,t}}
\in(0,1).
\]
Accordingly, define
\[
b_i(t)\;\defeq\;\frac{\lambda_{i,t}}{S(e)}\,\frac{\partial S(e)}{\partial \lambda_{i,t}}.
\]
Then $b_i\in\Delta^\circ(\mathcal X_i)$ and $b_i(t)=\Pr(X_i=t\mid e)$.

\item \emph{Gate marginals at sum nodes.}
For each sum node $s$ and each child $c\in\mathrm{ch}(s)$,
\[
\Pr(\text{parse visits }s \text{ and selects }c\mid e)
\;=\;
\frac{D(s)\,w_{sc}\,S(c)}{S(e)}.
\]
Define the local gating distribution
\[
b_s(c)\;\defeq\;\frac{w_{sc}\,S(c)}{S(s)}.
\]
Then $b_s\in\Delta^\circ(\mathrm{ch}(s))$, and $b_s(c)$ is the conditional probability of choosing $c$ given that the parse reaches $s$. More precisely,
\[
\Pr(\text{visit }s\mid e)\;=\;\frac{D(s)S(s)}{S(e)},
\qquad
\Pr(\text{select }c\mid e,\text{visit }s)\;=\;b_s(c).
\]
\end{enumerate}

Thus one upward pass, which computes the values $S(n)$, and one downward pass, which computes the values $D(n)$, recover all variable marginals and all sum-node gate marginals.
\end{theorem}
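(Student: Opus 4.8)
The plan is to prove \cref{thm:spn-derivatives-are-marginals} by interpreting the network polynomial $S(e)$ as an unnormalized sum over complete parses (subtrees) of the circuit, and then recognizing each claimed log-derivative as the marginal probability of an event under the normalized distribution $\Pr(\cdot\mid e)=(\text{parse weight})/S(e)$. The key structural fact I would establish first is a \emph{parse decomposition}: by completeness and decomposability, each product node's children have disjoint scopes that partition $\mathrm{sc}(p)$, and each sum node selects exactly one child, so a complete parse selects exactly one child at every visited sum, takes all children at every visited product, and reaches exactly one leaf $\mathbf 1[X_i=t]$ per variable $i\in\mathrm{sc}(\text{root})$. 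Expanding the recursion for $S(e)$ shows that $S(e)=\sum_{\pi}\,w(\pi)\prod_{(i,t)\in\pi}\lambda_{i,t}$, where $\pi$ ranges over parses, $w(\pi)$ is the product of the selected sum-edge weights, and the product runs over the leaves $\pi$ reaches. Decomposability is exactly what guarantees that each variable contributes a single factor $\lambda_{i,t}$ per parse (no variable is read twice along a parse), which is the property that makes $S$ multilinear in $\lambda$ and makes the derivative identities clean.

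Given this decomposition, part (a) follows by differentiating $S(e)$ with respect to $\lambda_{i,t}$. Since each $\lambda_{i,t}$ appears to the first power (by decomposability) in precisely those parses that read $X_i=t$, I have
\[
\lambda_{i,t}\,\frac{\partial S(e)}{\partial \lambda_{i,t}}
=\sum_{\pi:\,(i,t)\in\pi} w(\pi)\prod_{(j,u)\in\pi}\lambda_{j,u}
=\Pr(X_i=t\mid e)\,S(e),
\]
which rearranges to the stated identity and shows $b_i(t)=\Pr(X_i=t\mid e)$; positivity of all objects gives membership in $(0,1)$ and in $\Delta^\circ(\mathcal X_i)$. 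For part (b), I would combine \cref{lem:spn-local} with the upward values: the event ``parse visits $s$ and selects child $c$'' has total weight obtained by summing $w(\pi)\prod\lambda$ over parses routing through the edge $s\to c$, and this factors as (weight of all ways to reach $s$ from the root) $\times\,w_{sc}\,S(c)$. The first factor is precisely $D(s)$, since by definition $D(s)=\partial S(r)/\partial S(s)$ collects exactly the above-$s$ contributions; hence the event weight is $D(s)\,w_{sc}\,S(c)$ and dividing by $S(e)$ gives the claimed probability. Summing over $c\in\mathrm{ch}(s)$ and using $S(s)=\sum_c w_{sc}S(c)$ yields $\Pr(\text{visit }s\mid e)=D(s)S(s)/S(e)$, and the conditional $b_s(c)=w_{sc}S(c)/S(s)$ drops out as the ratio.

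The main obstacle I expect is making the parse-weight bookkeeping rigorous, specifically justifying that $D(s)$ equals the ``above-$s$'' parse weight for a general \emph{DAG} circuit rather than a tree. In a DAG a node may have several parents, so a parse can reach $s$ through different routes, and I must verify that the linearity of differentiation correctly sums these route contributions without double counting — this is where $D(s)=\partial S(r)/\partial S(s)$ and the additive reverse-mode recursion of \cref{lem:spn-local} do the work, treating $S(s)$ as a single scalar variable on which $S(r)$ depends multilinearly. The cleanest way to handle this is to argue by reverse induction on a topological order: the base case $D(r)=1$ is immediate, and the inductive step is exactly the formula for $D(c)$ proved in \cref{lem:spn-local}, so $D(s)S(s)$ telescopes to the total weight of parses passing through $s$. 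Once this identification is in place, everything else is routine algebra, and the final sentence of the theorem (one upward pass for $\{S(n)\}$, one reverse pass for $\{D(n)\}$ recovers all quantities) is immediate since $S(n)$ and $D(n)$ are computed by the stated local recursions.
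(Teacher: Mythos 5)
Your proposal is correct, but it is worth noting that the paper itself gives \emph{no} proof of this theorem: it is recorded as a recollection of Darwiche's differential semantics for arithmetic circuits, with the burden of proof deferred to \cite{darwiche2003differential} (see the sentence introducing the theorem, ``We now recall Darwiche's differential semantics\dots''). What you have written is essentially the standard argument underlying that cited result --- the parse (induced-subtree) decomposition of the network polynomial, multilinearity in $\lambda$, and the identification of $D(n)=\partial S(r)/\partial S(n)$ with the ``outside-$n$'' parse weight --- so your route supplies a self-contained verification where the paper relies on citation. The one step you should promote from a remark to an explicit lemma is the no-repetition property: within a single parse of a complete and decomposable DAG SPN, every node (not just every leaf) is visited at most once. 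This is what makes $S(r)$ \emph{affine} in $S(s)$, hence makes $D(s)\,S(s)$ equal the total weight of parses through $s$ and prevents double counting in the parent sum of \cref{lem:spn-local}. The argument is short but uses both structural hypotheses: two nodes visited in parallel by one parse must descend from distinct children of some product node, hence have disjoint scopes by decomposability; but two distinct parents of $c$ both have scope containing $\mathrm{sc}(c)\neq\varnothing$ (by completeness at sums and scope monotonicity at products), a contradiction. With that lemma in place, your differentiation of
\[
S(e)\;=\;\sum_{\pi} w(\pi)\prod_{(j,u)\in\pi}\lambda_{j,u}
\]
for part (a), and the factorization of the through-edge weight as $D(s)\,w_{sc}\,S(c)$ for part (b), are exactly right, and the final sentence of the theorem follows from the local recursions as you say.
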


\subsection{Projection interpretation: a two-step KL I-projection}
\label{subsec:spn-projection}
We now reformulate the same computation in the projection language used in Regime~(A).

\paragraph{Region family and ambient space.}
Let $\mathcal R \;\defeq\; \menge{\mathrm{sc}(n)}{n\in\mathcal N}$ be the multiset of node scopes, and for each $R\in\mathcal R$ write
\[
\mathcal X_R \;\defeq\; \prod_{i\in R}\mathcal X_i .
\]
We work on the product space of region tables
\[
\mathcal D_{\mathcal R}
\;\defeq\;
\prod_{R\in\mathcal R}\Delta^\circ(\mathcal X_R),
\]
equipped with the separable negative-entropy generator
\[
f_{\mathcal R}(q)
\;\defeq\;
\sum_{R\in\mathcal R}\sum_{x_R\in\mathcal X_R} q_R(x_R)\log q_R(x_R),
\qquad
q=(q_R)_{R\in\mathcal R}\in\mathcal D_{\mathcal R},
\]
whose Bregman divergence is $D_{f_{\mathcal R}}(r,q)
= \sum_{R\in\mathcal R}\mathrm{KL}(r_R\|q_R)$.

\paragraph{Product and consensus constraints.}
Define the product family $\mathcal P_{\mathcal R}\subset\mathcal D_{\mathcal R}$ by
\[
\mathcal P_{\mathcal R}
\;\defeq\;
\Menge{
q\in\mathcal D_{\mathcal R}}{
q_{\mathrm{sc}(p)}
=
\bigotimes_{c\in\mathrm{ch}(p)} q_{\mathrm{sc}(c)}
\ \text{for all } p\in\mathsf{Prod}
}.
\]
Here $\bigotimes$ denotes the product distribution on the disjoint child scopes. In log-coordinates,
\[
\theta_R=\nabla f(q_R)=1+\log q_R,
\]
these constraints become affine, up to normalization, and therefore $\widehat{\mathcal P}_{\mathcal R}
\defeq
\nabla f_{\mathcal R}(\mathcal P_{\mathcal R})$ is an affine subset of dual coordinates. Define the consensus set $\mathcal Q_{\mathcal R}\subset\mathcal D_{\mathcal R}$ by $q_R=q_{R'}$
whenever $R,R'\in\mathcal R$ represent the same scope.

\paragraph{Two-step KL operator.}
Let $z_{-1}\in\mathcal D_{\mathcal R}$ be the collection of region tables induced by the SPN parameters and the evidence. Define
\begin{equation*}
\label{eq:spn-two-step}
\mathcal T_{\mathrm{SPN}}(z_{-1})
\;\defeq\;
\Rproj^{f_{\mathcal R}}_{\widehat{\mathcal P}_{\mathcal R}}
\Bigl(
  \Iproj^{f_{\mathcal R}}_{\mathcal Q_{\mathcal R}}(z_{-1})
\Bigr).
\end{equation*}
This is the SPN analogue of the operator $\mathcal T_{\mathrm{Proj}}$ from Regime~(A).

\paragraph{Unfolding notation.}
If the SPN has shared subcircuits, let $\widetilde G=(\widetilde{\mathcal N},\widetilde{\mathcal E})$ be the rooted tree obtained by unfolding the DAG, and let $\rho:\widetilde{\mathcal N}\to\mathcal N$ be the natural surjection sending each copy to its original node. Each copy $\tilde n$ has the same node type, scope, and local parameters as $\rho(\tilde n)$; in particular,
\[
\mathrm{sc}(\tilde n)=\mathrm{sc}(\rho(\tilde n)).
\]
If $\tilde s$ is a copy of a sum node $s$ and $c\in\mathrm{ch}(s)$, we write $\tilde c$ for the child of $\tilde s$ corresponding to $c$. The edge $\tilde s\to\tilde c$ carries the same weight $w_{sc}$. Write $\widetilde{\mathcal R}$ for the multiset of scopes in $\widetilde G$.

\begin{lemma}[Tree case]
\label{lem:spn-tree-case}
Suppose the SPN has no shared subcircuits, so that its underlying node graph is a rooted tree. Then one upward--downward pass computes
\[
\mathcal T_{\mathrm{SPN}}
=
\Rproj^{f_{\mathcal R}}_{\widehat{\mathcal P}_{\mathcal R}}
\circ
\Iproj^{f_{\mathcal R}}_{\mathcal Q_{\mathcal R}},
\]
and the resulting region marginals coincide with the beliefs of
\cref{thm:spn-derivatives-are-marginals}.
\end{lemma}

\begin{proof}
In the tree case, the constraints encoded by $\mathcal Q_{\mathcal R}$ and $\mathcal P_{\mathcal R}$ are exactly the Walsh--Regalia consensus and product constraints on the associated factor graph. Hence $\mathcal T_{\mathrm{SPN}}
=
\Rproj^f_{\widehat{\mathcal P}}\circ\Iproj^f_{\mathcal Q}$. By \cref{prop:WR}, one upward--downward BP pass computes this operator and returns exact marginals; by \cref{thm:spn-derivatives-are-marginals}, those marginals are precisely the beliefs $b_i$ and $b_s$.
\end{proof}

\begin{lemma}[Unfolding preserves upward values]
\label{lem:spn-unfolding-upward}
For every $\tilde n\in\widetilde{\mathcal N}$,
\[
S_{\widetilde G}(\tilde n)=S_G(\rho(\tilde n)).
\]
In particular,
\[
S_{\widetilde G}(e)=S_G(e).
\]
\end{lemma}

\begin{proof}
We argue by induction from the leaves upward. The claim is immediate at leaf indicators. If $\tilde n$ is a product node, then its children are copies of the children of $\rho(\tilde n)$, so the induction hypothesis and the product recursion give
\[
S_{\widetilde G}(\tilde n)
=
\prod_{\tilde c\in\mathrm{ch}(\tilde n)} S_{\widetilde G}(\tilde c)
=
\prod_{c\in\mathrm{ch}(\rho(\tilde n))} S_G(c)
=
S_G(\rho(\tilde n)).
\]
The same argument with the sum recursion proves the result for sum nodes.
\end{proof}

\begin{lemma}[Downward values aggregate over copies]
\label{lem:spn-unfolding-downward}
Let $D_G$ and $D_{\widetilde G}$ denote the downward values on $G$ and $\widetilde G$, respectively. Then for every node $n\in\mathcal N$,
\[
D_G(n)=\sum_{\tilde n:\,\rho(\tilde n)=n} D_{\widetilde G}(\tilde n).
\]
\end{lemma}

\begin{proof}
We argue by reverse induction from the root downward. The claim is trivial at the root. Suppose it holds for all parents of $n$. Then
\[
D_G(n)
=
\sum_{u\in\mathrm{pa}_G(n)}
D_G(u)\,
\frac{\partial S_G(u)}{\partial S_G(n)}.
\]
Substituting the induction hypothesis gives
\[
D_G(n)
=
\sum_{u\in\mathrm{pa}_G(n)}
\sum_{\tilde u:\,\rho(\tilde u)=u}
D_{\widetilde G}(\tilde u)\,
\frac{\partial S_G(u)}{\partial S_G(n)}.
\]
Each copy $\tilde u$ has a unique child $\tilde n$ with $\rho(\tilde n)=n$, and \cref{lem:spn-unfolding-upward} implies
\[
\frac{\partial S_{\widetilde G}(\tilde u)}{\partial S_{\widetilde G}(\tilde n)}
=
\frac{\partial S_G(u)}{\partial S_G(n)}.
\]
Hence the sum above is exactly
\[
\sum_{\tilde n:\,\rho(\tilde n)=n} D_{\widetilde G}(\tilde n).
\]
\end{proof}

\begin{lemma}[Beliefs on the DAG are merged beliefs on the unfolding]
\label{lem:spn-merge-beliefs}
Under the identification of copied leaf indicators with the original evidence variables, the beliefs on the DAG are obtained by merging the corresponding beliefs on the unfolding. More precisely, for each variable $X_i$ and state $t\in\mathcal X_i$,
\[
\frac{\lambda_{i,t}}{S_G(e)}\frac{\partial S_G(e)}{\partial \lambda_{i,t}}
=
\sum_{\tilde\ell:\,\rho(\tilde\ell)=(i,t)}
\frac{\lambda_{\tilde\ell}}{S_{\widetilde G}(e)}
\frac{\partial S_{\widetilde G}(e)}{\partial \lambda_{\tilde\ell}},
\]
and for each sum node $s$ and child $c\in\mathrm{ch}(s)$,
\[
\frac{D_G(s)\,w_{sc}\,S_G(c)}{S_G(e)}
=
\sum_{\tilde s:\,\rho(\tilde s)=s}
\frac{D_{\widetilde G}(\tilde s)\,w_{sc}\,S_{\widetilde G}(\tilde c)}
     {S_{\widetilde G}(e)},
\]
where $\tilde c$ denotes the child of $\tilde s$ corresponding to $c$. Moreover,
\[
\frac{w_{sc}S_G(c)}{S_G(s)}
=
\frac{w_{sc}S_{\widetilde G}(\tilde c)}{S_{\widetilde G}(\tilde s)}
\qquad
\text{for every copy }\tilde s\text{ of }s.
\]
\end{lemma}

\begin{proof}
Under the identification
\[
\lambda_{\tilde\ell}=\lambda_{i,t}
\qquad
\text{whenever }\rho(\tilde\ell)=(i,t),
\]
the unfolded and original network polynomials agree:
\[
S_G(e)=S_{\widetilde G}(e).
\]
Differentiating with respect to the shared variable $\lambda_{i,t}$ yields
\[
\frac{\partial S_G(e)}{\partial \lambda_{i,t}}
=
\sum_{\tilde\ell:\,\rho(\tilde\ell)=(i,t)}
\frac{\partial S_{\widetilde G}(e)}{\partial \lambda_{\tilde\ell}}.
\]
Multiplying by $\lambda_{i,t}/S_G(e)$ gives the first identity. For the gate marginals, \cref{lem:spn-unfolding-downward} and \cref{lem:spn-unfolding-upward} imply
\[
D_G(s)
=
\sum_{\tilde s:\,\rho(\tilde s)=s} D_{\widetilde G}(\tilde s),
\qquad
S_G(c)=S_{\widetilde G}(\tilde c),
\qquad
S_G(e)=S_{\widetilde G}(e),
\]
which yields the second identity after multiplication by $w_{sc}S_G(c)/S_G(e)$. The final identity follows immediately from \cref{lem:spn-unfolding-upward}.
\end{proof}

\begin{theorem}[SPN sweep as a KL two-step projection]
\label{prop:spn-two-step}
For a complete and decomposable SPN with positive weights and evidence,
one upward--downward pass computes the operator
\[
\mathcal T_{\mathrm{SPN}}
=
\Rproj^{f_{\mathcal R}}_{\widehat{\mathcal P}_{\mathcal R}}
\circ
\Iproj^{f_{\mathcal R}}_{\mathcal Q_{\mathcal R}},
\]
and the resulting region marginals coincide with the beliefs of
\cref{thm:spn-derivatives-are-marginals}.
\end{theorem}

\begin{proof}
The tree case is exactly \cref{lem:spn-tree-case}. For a general DAG SPN, let $\widetilde G$ be the unfolding. By \cref{lem:spn-tree-case}, one upward--downward pass on $\widetilde G$ computes the analogous two-step KL projection on $\widetilde{\mathcal R}$, and its region marginals coincide with the beliefs on $\widetilde G$.

Passing from $\widetilde G$ back to $G$ amounts to identifying copies that represent the same original scope, which is precisely the consensus identification encoded by $\mathcal Q_{\mathcal R}$. The product constraints are local and unchanged by unfolding. Hence the region marginals of $\mathcal T_{\mathrm{SPN}}(z_{-1})$ are obtained by merging the corresponding region marginals from the unfolded tree. By \cref{lem:spn-merge-beliefs}, these merged marginals are exactly the variable and gate beliefs of \cref{thm:spn-derivatives-are-marginals}.
\end{proof}

\begin{remark}
The correspondence relies crucially on decomposability. When this condition fails, the network polynomial is no longer multilinear, and the projection--differentiation equivalence breaks down.
\end{remark}

\section{Conclusion}
\label{sec:discussion}

We have identified two KL-projection correspondences underlying backpropagation.

\paragraph{Outlook.}
We conclude with several directions that this perspective naturally motivates.

\begin{itemize}[leftmargin=1.5em,itemsep=0.25em]

\item \emph{\textbf{Learning} dynamics of gradient-based, hybrid, and projection-based models.}
The backward pass can be interpreted as structured signal propagation~\cite{schoenholz2017deep} induced by the linearization of a KL projection map. This connects classical analyses of signal propagation in gradient-based models~\cite{roberts2022principles} with projection-driven dynamics. In particular, one may study stability, implicit regularization, and directional convergence of the associated fixed-point systems, which are often nonconvex or inconsistent~\cite{alwadani2021behaviour,andrews2025augmented,dinh2025algorithmic,moursi2016douglas,singh2024strong}. 
From this viewpoint, learning dynamics are governed not only by the update rule but also by structural properties of the underlying feedforward computational graph, such as depth, connectivity, and path interactions, which control how signals propagate and interact. This perspective also suggests analyzing the existence, uniqueness, and stability of fixed points, as well as the global structure of solution landscapes and their transitions, beyond classical convergence guarantees. A natural conjectural direction is that learning exhibits a form of \emph{finite-window stabilization}: after sufficiently many training steps, forward computations, backward signals, and parameter updates become jointly stable over any fixed horizon, so that both representations and gradient signals change only negligibly over bounded windows.

\item \emph{\textbf{Sampling} and KL gradient flows.}
Many sampling procedures, including flow-based and diffusion models, can be viewed as dynamical systems whose stationary laws solve KL-type or nonconvex variational problems. From the projection viewpoint developed here, a natural conjecture is that such dynamics admit an implicit decomposition into alternating projection steps onto evolving constraint sets, with the associated differentials governing transport and stability. In this sense, sampling can be interpreted as evaluating or approximating the same projection operator that underlies inference and learning, but along a continuous-time or stochastic trajectory. This suggests a unified perspective in which convergence, mixing, and finite-time accuracy are controlled by the geometry of the underlying projection dynamics.

\item \emph{\textbf{Inference} via structured projection operators.}
A wide range of inference procedures can be interpreted as enforcing local consistency and factorization constraints through iterative updates. In the framework developed here, these updates arise naturally as alternating KL projections onto consensus and product families, mirroring the operators appearing in both regimes. This suggests that probabilistic inference, logical reasoning, and variational approximations can all be viewed as instances of evaluating a common projection operator on different state spaces. A natural conjectural direction is that lifting classical Euclidean projection-based methods to KL geometry yields hybrid continuous--discrete representations in which exact and approximate inference are unified through the same underlying projection mechanism.

\end{itemize}

Finally, this viewpoint suggests that learning, sampling, and inference are not separate paradigms but different manifestations of a common projection-based operator, allowing one to study any pair—or all three—within a unified geometric framework.

\paragraph{Acknowledgments.}
This work was presented in an invited talk, \emph{Learning, Sampling, and Inference Through the Lens of Projections}, at the Banff workshop \emph{Strategies for Handling Applications with Nonconvexity} (SHAWN), May 2025.
The author gratefully acknowledges partial travel support from the PIMS–BIRS–Simons Travel Award and the Alexander von Humboldt (AvH) Foundation, and thanks Suvrit Sra for helpful discussions.

\bibliographystyle{plain}
\bibliography{ref-bp}
\end{document}